\def\ssm{\smallsetminus}
\def\Z{\mathbb{Z}}
\def\<{\langle}
\def\>{\rangle}
\def\-{\overline}
\newtheorem{thmA}{Theorem}
\newtheorem{theorem}{Theorem}[section]
\newtheorem{lemma}[theorem]{Lemma}
\newtheorem{proposition}[theorem]{Proposition}
\newtheorem{corollary}[theorem]{Corollary}
\theoremstyle{definition} 
\newtheorem{remarks}[theorem]{Remarks}
\newtheorem{remark}[theorem]{Remark}
\begin{document}
 
\catcode`\@=11
\def\serieslogo@{\relax}
\def\@setcopyright{\relax}
\catcode`\@=12

\textheight 18cm 

\title[Complete Embeddings of Groups]{Complete Embeddings of Groups}

\author[Bridson]{Martin R.~Bridson}
\address{Martin R.~Bridson\\
Mathematical Institute \\
Andrew Wiles Building \\
ROQ, Woodstock Road\\
Oxford OX2 6GG\\
Europe }
\email{bridson@maths.ox.ac.uk}

 \author[Short]{ Hamish Short }
\address{ Hamish Short \\
L.A.T.P., U.M.R. 6632 \\
C.M.I.,
39 Rue Joliot--Curie\\
Universit\'e de Provence, F--13453\\
Marseille cedex 13, France }
\email{ hamish@cmi.univ-mrs.fr } 

\subjclass{20F65, 20E08, 20F67, 57K32}

\keywords{complete groups, asymmetric hyperbolic manifolds, embedding}

\begin{abstract} Every countable group $G$ can be embedded in a finitely generated group $G^*$ that is hopfian and
{\em complete}, i.e.~$G^*$ has trivial centre and every
epimorphism $G^*\to G^*$ is an inner automorphism. Every finite subgroup of $G^*$
is conjugate to a finite subgroup of $G$. If $G$ has a  finite presentation (respectively,  a finite classifying space),
then so does $G^*$. Our construction of $G^*$ relies on the existence of closed hyperbolic
3-manifolds that are asymmetric and non-Haken.
\end{abstract}

\maketitle

\begin{center}
{\em For our friend and coauthor Chuck Miller in his ninth decade, \newline
 with deep respect and affection}
\end{center}

\section*{Introduction}

In 1971, Charles F. Miller III and Paul E. Schupp \cite{MS} used small cancellation theory to prove
 that every countable group $G$ can be embedded in  a finitely generated group $G^*$ that is {\em hopfian} and {\em complete} (asymmetric). 
They construct $G^*$ as a quotient of a free product
$G\ast U(p,q)$, where $U(p,q)$ is a free product of finite cyclic groups. In particular, each of the enveloping groups that
they construct has torsion. 
The purpose of this note is to present an alternative construction of $G^*$ that does not 
introduce torsion.
It also preserves finiteness properties of $G$.

\begin{thmA}\label{thm} Every countable group $G$ can be embedded in a finitely generated group $G^*$ such that 
\begin{enumerate}
\item $G^*$ is hopfian and complete;
\item every finite subgroup of $G^*$
is conjugate to a finite subgroup of $G$;
\item if $G$ has a finite presentation (respectively, a finite classifying space of dimension $d\ge 3$),
then so does $G^*$.
\end{enumerate}
\end{thmA}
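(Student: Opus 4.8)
The plan is to follow the strategy of Miller and Schupp \cite{MS}, substituting for their auxiliary group $U(p,q)=\Z/p\ast\Z/q$ a torsion-free but equally rigid group built from hyperbolic geometry. The geometric input --- and the fact on which the construction relies --- is a finite family of closed hyperbolic $3$-manifolds $M_1,\dots,M_k$ that are asymmetric and non-Haken, with pairwise distinct volumes (and, in the at most one case in which it could matter, with $\pi_1 M_i\not\cong G$). Write $Q_i=\pi_1 M_i$. By Mostow rigidity the $Q_i$ are pairwise non-isomorphic, and, $M_i$ being asymmetric, $\Out(Q_i)\isom\mathrm{Isom}(M_i)=1$; each $Q_i$ is torsion-free, linear (hence residually finite and Hopfian), freely indecomposable, one-ended and co-Hopfian; since an essential splitting of a closed aspherical irreducible $3$-manifold group is carried by an incompressible surface, non-Hakenness of $M_i$ means that $Q_i$ admits \emph{no} nontrivial splitting as an amalgam or HNN extension over any subgroup; and $M_i$ is a compact $3$-dimensional $K(Q_i,1)$.

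I would then form $\Gamma=G\ast Q_1\ast\cdots\ast Q_k$ and take $G^*=\Gamma/\langle\langle\mathcal{R}\rangle\rangle$, where $\mathcal{R}$ is a family of words in $\Gamma$, in the style of \cite{MS}, engineered to serve three purposes at once. First, a Higman-type encoding: using a (malnormal, quasiconvex) free subgroup of $Q_1$ as the machine, the relators express the a priori infinitely many generators of $G$ in terms of generators of the $Q_i$, so that $G^*$ is finitely generated, and finitely presented when $G$ is. Second, small cancellation over the free product $\Gamma$, of strength sufficient that $G$ and each $Q_i$ embed in $G^*$; that $G^*$ is hyperbolic relative to $\{G,Q_1,\dots,Q_k\}$, so that --- the $Q_i$ being torsion-free --- every finite subgroup of $G^*$ is conjugate into $G$; that $Z(G^*)=1$; and that the complex $Y$ obtained from $K(G,1)\vee M_1\vee\cdots\vee M_k$ by attaching $2$-cells along $\mathcal{R}$ is aspherical, so that $Y$ is a classifying space for $G^*$, finite when $G$ is finitely presented and of dimension $\max\{d,3\}=d$ when $G$ has one of dimension $d\ge3$. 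Third, rigidification: the relators are arranged so that any automorphism of $G^*$ that is inner on each $Q_i$ is inner on all of $G^*$ --- in particular they destroy the ``partial conjugation'' automorphisms that free products always carry.

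It then remains to check completeness. As in \cite{MS}, the small-cancellation geometry --- the analysis of van Kampen diagrams over the presentation, or equivalently of the relatively hyperbolic structure --- should show that $G^*$ is Hopfian, so that every epimorphism $\Ourauto\colon G^*\onto G^*$ is an automorphism. The same geometry, together with the one-endedness and non-splitting of the $Q_i$ --- which keep each $Q_i$ elliptic in every graph-of-groups decomposition the presentation exhibits --- and the non-isomorphism of distinct $Q_i$, forces $\Ourauto$ to carry each $Q_i$ to a conjugate of itself, compatibly; so after composing with an inner automorphism $\Ourauto$ restricts to an automorphism of each $Q_i$, which is inner because $\Out(Q_i)=1$ and may be absorbed, after which the rigidifying relators force $\Ourauto$ to be inner on all of $G^*$. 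Since $Z(G^*)=1$, this yields (1)--(3).

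The crux --- and the real content of the note --- is producing $\mathcal{R}$: a single family of relators that is simultaneously a Higman encoding, a small-cancellation family over $\Gamma$ strong enough that the embedding, asphericity, relative hyperbolicity, triviality of the centre and Hopficity all go through, and rich enough to pin down every automorphism of $G^*$ (in particular to eliminate the partial conjugations and kill the centre) while remaining compatible with the asphericity and with the embedding of $G$ and the $Q_i$. Miller and Schupp did exactly this over $U(p,q)$; the new ingredient is that a free product of fundamental groups of asymmetric non-Haken hyperbolic $3$-manifolds is a torsion-free replacement that is just as rigid ($\Out=1$, no splittings, co-Hopfian, a compact $3$-dimensional classifying space), so the same relator calculus can be run over it --- now introducing no torsion beyond that already in $G$ and preserving the finiteness properties of $G$. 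I expect the delicate point to be this last requirement on $\mathcal{R}$: that it genuinely removes every outer automorphism without disturbing the other, geometric, properties.
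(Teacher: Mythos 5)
Your proposal is a genuinely different route from the paper's, but it is a plan rather than a proof, and the gap is exactly where you say it is: the family $\mathcal{R}$ is never produced. Everything in the argument --- finite generation via the Higman-type encoding, the embedding of $G$ and the $Q_i$, asphericity and the dimension count, relative hyperbolicity, triviality of the centre, the Hopf property, and above all the rigidification --- is deferred to properties that $\mathcal{R}$ is ``engineered'' or ``arranged'' to have, with no indication of how to arrange them simultaneously or why they are mutually compatible. This matters because the Miller--Schupp relator calculus you propose to transplant depends essentially on torsion: their automorphism analysis works because epimorphisms preserve the orders of elements, which pins down where the finite free factors of $U(p,q)$ must go. Once torsion is removed that mechanism disappears, and some replacement invariant is needed; the paper's own closing remark on the small-cancellation route passes to $(G\times\Z^2)\ast(B\times\Z)$ precisely so that ranks of centralizers can play this role, rather than working with a bare free product $G\ast Q_1\ast\cdots\ast Q_k$ as you do. Relatedly, the step ``the same geometry \dots forces $\Phi$ to carry each $Q_i$ to a conjugate of itself'' is unsupported: a small-cancellation quotient of a free product does not act on a tree with the $Q_i$ as vertex groups, so the one-endedness and property FA of the $Q_i$ give you nothing directly; you would need an actual theorem about images of the peripheral subgroups under self-epimorphisms of the relatively hyperbolic quotient. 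Your background facts about asymmetric non-Haken hyperbolic $3$-manifolds (Mostow, $\Out(Q_i)=1$, co-Hopficity, no splittings via Stallings) are all correct and are indeed the geometric input the paper uses --- but they enter a completely different construction.

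The paper takes no quotients at all. It first conditions $G$ (free products with $\Z$ and with a rank-two free group) so that $G=\<F_1,F_2\>$ for finitely generated free subgroups $F_1,F_2$ with non-cyclic intersection whose intersection has trivial centralizer, and then forms the amalgam $G^*=A_1\ast_{F_1}G\ast_{F_2}A_2$, where the $A_i$ are complete, co-Hopfian, torsion-free hyperbolic groups (e.g.\ fundamental groups of the surgered knot complements) that do not split over free groups and contain the $F_i$ as malnormal free subgroups. Because $G^*$ genuinely splits, every epimorphism can be analysed on the Bass--Serre tree of that splitting: property FA forces each $\phi(A_i)$ into a vertex group, the quotients $A_i/\<\!\<F_i\>\!\>$ being infinite forces the two images to land in conjugates of $A_1$ and $A_2$ respectively, co-Hopficity and completeness of the $A_i$ plus malnormality of the $F_i$ then force $\phi$ to be inner. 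The embedding, the torsion statement, and the finiteness properties are immediate from standard facts about amalgams, with none of the delicate relator bookkeeping your approach requires. If you want to pursue your route, the entire content of the proof still lies ahead of you in constructing and verifying $\mathcal{R}$; the paper judges that route workable but ``less concise and explicit.''
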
  

Our construction of $G^*$ 
is more explicit than that of Miller and Schupp. It is non-trivial
in that it relies on 
the existence of asymmetric hyperbolic groups with additional properties,
but the basic idea behind it is straightforward: after some gentle preparation,
we are able to assume that $G$ is generated by two finitely generated free subgroups $F_1,F_2<G$;
we then {\em rigidify}  $G$ by attaching certain complete (asymmetric)
groups $A_1$ and $A_2$ to it
along $F_1$ and $F_2$; the groups $A_i$ are constructed so that any epimorphism
$\psi$
of the resulting amalgam $G^*$ must preserve the decomposition 
$G^* = A_1\ast_{F_1} G \ast_{F_2} A_2$, sending each $A_i$ to itself; by ensuring
that the $A_i$ are hopfian, as well as complete, we force the restrictions $\psi|_{A_i}$ to be the identity; and since $G^*=\<A_1, A_2\>$, we conclude that $\psi$ is the identity.

Groups $A_i$ with the properties that we need can be found among the fundamental groups of closed hyperbolic 3-manifolds obtained by Dehn surgery on the knots shown in figure 1, as we shall explain.

\section{Preliminaries}

We remind the reader of some terminology.
A group $G$ is termed {\em hopfian} if every epimorphism
$G\to G$ is an isomorphism, and co-hopfian is every monomorphism $G\to G$
is an isomorphism. $G$ is said to be {\em complete} (or {\em asymmetric})
if it has trivial centre and every automorphism is inner. A subgroup $H<G$
is {\em malnormal} if $H^g\cap H \neq 1$ implies $g\in H$. A 
group $G$ is said to {\em split over a free group} if $G$ 
can be decomposed as an amalgamated free product $G=A\ast_F B$ or HNN extension
$G=B\ast_F$ with $F$ free. A group has Serre's property {\rm{FA}} if it fixes
a point whenever it acts on a simplicial tree.

We shall assume that the reader is familiar with 
the rudiments of Bass-Serre theory \cite{serre} and the homology of groups.

\subsection*{The Mild Preparation of $G$}

A classical construction of B.H.~Neumann embeds a countable group $G$ in a finitely
generated group $\widetilde{G}$ by means of
HNN extensions and amalgamated free products 
(see \cite[page 188]{LS}).  
The finite subgroups of $\widetilde{G}$ are conjugate to subgroups of $G$;  in particular
$\widetilde{G}$  is torsion-free if $G$ is torsion-free. Thus, in our attempts to construct $G^*$,
there is no loss of generality in assuming that $G$ is finitely generated. 
 
Replacing
$G$ by $G\ast\Z$ if necessary, we may also assume that our group has a generating set 
$\{a_0,a_1,\dots,a_n\}$ where the $a_i$ each have infinite order: given
$G=\<b_1,\dots,b_n\>$ and
$x$ generating $\Z$, define $a_0=x$ and $a_i=xb_i$. We assume that this modification
has been made.
The normal form theorem for free products then yields:

\begin{lemma}\label{l:2free} In $G\ast\<s,t\>$, the subsets $\{s,a_0t, \dots, a_n(s^{-n}ts^n)\}$ and $\{s,t,a_0t\}$
generate free subgroups of ranks $(n+2)$ and $3$, respectively.
\end{lemma}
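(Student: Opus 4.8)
The plan is to reduce both assertions to the normal form theorem for free products, making essential use of the fact that the factor $\<s,t\>$ of $\Gamma:=G\ast\<s,t\>$ is free of rank $2$. Before starting, note that since $a_0=x$ and $a_i=xb_i$, where $x$ generates the adjoined $\Z$ and $\{b_1,\dots,b_n\}$ generates the original group, we may (after deleting redundant generators) assume the $b_i$ to be pairwise distinct and non-trivial; then $a_0,\dots,a_n$ are pairwise distinct as well as of infinite order.

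I would treat the rank-$3$ assertion first, as it is cleaner. Since $a_0=(a_0t)t^{-1}$, we have $\<s,t,a_0t\>=\<s,t,a_0\>$. Because $a_0\in G$ has infinite order, $\<a_0\>\isom\Z$, and the normal form theorem identifies the subgroup $\<a_0,s,t\>$ of $\Gamma=G\ast\<s\>\ast\<t\>$ with the internal free product $\<a_0\>\ast\<s\>\ast\<t\>\isom\Z\ast\Z\ast\Z$: a free group of rank $3$ on $\{a_0,s,t\}$. Replacing the basis element $a_0$ by $a_0t$ and keeping $s,t$ is an elementary Nielsen transformation, hence an automorphism of this free group; it sends the basis $\{a_0,s,t\}$ to $\{s,t,a_0t\}$, which is therefore also a basis, as required.

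For the rank-$(n+2)$ assertion, put $u:=s$ and, for $0\le i\le n$, $v_i:=a_iw_i$ with $w_i:=s^{-i}ts^i$; thus each $w_i$ is a non-trivial element of the free factor $F:=\<s,t\>$ and each $a_i$ a non-trivial element of $G$. Viewing $\Gamma$ as $G\ast F$, I would show that every non-empty reduced word $W=u^{c_0}v_{i_1}^{\varepsilon_1}u^{c_1}\cdots v_{i_m}^{\varepsilon_m}u^{c_m}$ in the free group on $\{u,v_0,\dots,v_n\}$ has non-trivial image in $\Gamma$. If $m=0$ then $W$ maps to a non-trivial power of $s$; so suppose $m\ge1$ and substitute $v_i^{+1}=a_iw_i$, $v_i^{-1}=w_i^{-1}a_i^{-1}$, $u^{\pm1}=s^{\pm1}\in F$. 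The $G$-syllables of the resulting expression are exactly the letters $a_{i_l}^{\varepsilon_l}$ coming from the $v_{i_l}^{\varepsilon_l}$, all non-trivial since the $a_i$ have infinite order. The crux is to verify that, after cancellation inside $F$, these $G$-syllables persist and remain separated by non-trivial $F$-syllables: for each junction $v_j^{\varepsilon}u^{c}v_k^{\eta}$ of $W$ one writes out the intervening material in $F$ --- a product of at most two of the $w_i^{\pm1}$ with a power of $s$ in between --- and checks, using that $F$ is free, that it is non-trivial except when $(\varepsilon,\eta,c)=(-1,+1,0)$, in which case it is empty and the flanking $G$-syllables meet as $a_j^{-1}a_k$; but reducedness of $W$ then forces $j\ne k$, and distinctness of the $a_i$ gives $a_j^{-1}a_k\ne1$. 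A similar check at the two ends of $W$ then exhibits the image of $W$ in $G\ast F$ as a non-empty reduced alternating word, which is non-trivial by the normal form theorem; hence $\{u,v_0,\dots,v_n\}$ freely generates a free subgroup of rank $n+2$.

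The step I expect to demand the most care is this last junction analysis: ruling out the destruction of a $G$-syllable through cancellation inside $F$, and keeping straight the resulting bookkeeping. (If one prefers not to assume the $a_i$ distinct, the only troublesome configuration --- $v_j^{-1}v_k^{+1}$ with $j\ne k$ and $a_j=a_k$ --- expands to the single non-trivial $F$-syllable $s^{-j}t^{-1}s^{j-k}ts^{k}$, so handling these adjacencies first restores the clean picture.) All the remaining steps are immediate consequences of the normal form theorem for free products.
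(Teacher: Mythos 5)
Your main argument is correct and is precisely the verification that the paper omits: the paper offers no proof of this lemma beyond the phrase ``the normal form theorem for free products then yields'', so your rank-$3$ argument (the subgroup generated by subgroups of distinct free factors is their free product, followed by a Nielsen move) and your syllable-by-syllable junction analysis for the rank-$(n+2)$ case are the intended content, carried out in detail. The case analysis checks out: for reduced $W$ the intervening $F$-material at a junction $v_j^{\varepsilon}u^{c}v_k^{\eta}$ is trivial only when $(\varepsilon,\eta,c)=(-1,+1,0)$, and with the $a_i$ pairwise distinct the merged $G$-syllable $a_j^{-1}a_k$ is then non-trivial, so the image of $W$ is a non-empty alternating normal form in $G\ast F$.

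Your closing parenthetical, however, is false, and you should delete it. When $a_j=a_k$ with $j\ne k$, the collapsed junction does produce the non-trivial element $w_j^{-1}w_k=s^{-j}t^{-1}s^{j-k}ts^{k}$ of $F$, but this element is then flanked by further $F$-material (a $w_i$, powers of $s$, a $w_l^{-1}$), and the merged $F$-block can itself die, triggering a cascade that destroys more $G$-syllables. Concretely, take $a_0=a_1=a_2=x$ (i.e.\ $b_1=b_2=1$, a degenerate generating set that the paper's hypotheses do not literally exclude) and $v_i=a_i s^{-i}ts^i$, $u=s$. The word $v_0\,u\,v_1^{-1}\,v_2\,u^{-1}\,v_1^{-1}$ is reduced in the free group on $\{u,v_0,v_1,v_2\}$, yet in $G\ast\<s,t\>$ it expands to
$xt\cdot s\cdot s^{-1}t^{-1}sx^{-1}\cdot xs^{-2}ts^2\cdot s^{-1}\cdot s^{-1}t^{-1}sx^{-1}
= x(tt^{-1})s^{-1}t(s^{2}s^{-2})t^{-1}sx^{-1}
= x(s^{-1}s)x^{-1}=1$,
so $\{s,v_0,v_1,v_2\}$ is not a free basis. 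Thus the pairwise distinctness of the $a_i$ (equivalently, that the $b_i$ are distinct and non-trivial) is not a dispensable convenience but a genuinely necessary hypothesis, and your decision to arrange it at the outset is in fact a needed correction to the statement. None of this affects the paper, since the subsequent argument uses only that $F_1$ and $F_2$ are finitely generated free subgroups, not their exact ranks.
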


\begin{corollary}\label{r:f12}
Replacing $G$ by $G\ast\<s,t\>$ if necessary, we may assume that $G = \<F_1,F_2\>$, where $F_1$ and $F_2$ are finitely
generated free groups with non-cyclic intersection and the centralizer of $F_1\cap F_2$ in $G$ is trivial.
\end{corollary}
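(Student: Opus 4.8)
The plan is to apply Lemma~\ref{l:2free} to the free product $\widehat G := G\ast\<s,t\>$, making a judicious choice of the two free subgroups. Put $F_1 := \<s,\, a_0t,\, a_1(s^{-1}ts),\, \dots,\, a_n(s^{-n}ts^n)\>$ and $F_2 := \<s,\, t,\, a_0t\>$. By Lemma~\ref{l:2free} these are free of ranks $n+2$ and $3$ respectively, and since a generating set of a free group whose cardinality equals the rank is automatically a free basis, each of the two displayed sets is a free basis of the corresponding $F_i$. Next, $F_2$ contains $t$ and $a_0t$, hence $a_0$; and $\<F_1,F_2\>$ contains $s$, $t$, and $a_i(s^{-i}ts^i)$ for every $i$, hence $s^{-i}ts^i$ and therefore $a_i$. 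As $\widehat G$ is generated by $a_0,\dots,a_n,s,t$, we conclude $\widehat G=\<F_1,F_2\>$. Thus $F_1$ and $F_2$ are finitely generated free subgroups that generate $\widehat G$, and it remains to check the two conditions on $F_1\cap F_2$.

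For the non-cyclicity: both $s$ and $a_0t$ lie in $F_1\cap F_2$, and they are two distinct members of the free basis of $F_1$, so $\<s,a_0t\>$ is free of rank $2$; hence $F_1\cap F_2\supseteq\<s,a_0t\>$ is non-cyclic. For the centralizer condition it suffices to prove $C_{\widehat G}(\<s,a_0t\>)=1$, since this group contains $C_{\widehat G}(F_1\cap F_2)$. Any element centralizing $\<s,a_0t\>$ lies in $C_{\widehat G}(s)$. Now $s$ is a non-trivial element of the free factor $\<s,t\>$ that is not a proper power there, so by Bass--Serre theory applied to the splitting $\widehat G=G\ast\<s,t\>$ (equivalently, by the standard description of centralizers in a free product) one has $C_{\widehat G}(s)=C_{\<s,t\>}(s)=\<s\>$. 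But no non-trivial power of $s$ commutes with $a_0t$, because $s$ and $a_0t$ generate a free group of rank $2$; hence $C_{\widehat G}(\<s,a_0t\>)\le\<s\>\cap C_{\widehat G}(a_0t)=1$. So in fact the full centralizer of $F_1\cap F_2$ in $\widehat G$ is trivial.

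Finally, replacing $G$ by $\widehat G$ is harmless for the rest of the argument: by the Kurosh subgroup theorem every finite subgroup of $G\ast\<s,t\>$ is conjugate into $G$ (as $\<s,t\>$ is torsion-free), and if $G$ is finitely presented, or has a finite classifying space of dimension $d\ge 3$, then so does $G\ast\<s,t\>$ (one wedges on a rank-$2$ free factor). This establishes the corollary with the new $G:=\widehat G$.

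No step here is a serious obstacle once Lemma~\ref{l:2free} is in hand; the only point that needs genuine care is the centralizer computation, where one must exploit the free-product structure of $\widehat G$ (via the Bass--Serre tree of the splitting) rather than merely reasoning inside the free group $F_1$ --- working inside $F_1$ alone would only bound the centralizer of $F_1\cap F_2$ within $F_1$, not within the ambient group.
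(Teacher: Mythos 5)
Your proof is correct and follows exactly the route the paper intends: the corollary is stated without proof because it is meant to follow immediately from Lemma~\ref{l:2free} by taking $F_1$ and $F_2$ to be the subgroups generated by the two displayed subsets, and your verification of the generation, non-cyclic intersection, and centralizer claims (via the description of centralizers in a free product) fills in precisely the details the authors leave implicit. The closing remarks about finite subgroups and finiteness properties likewise match what the paper records later in the proof of Theorem~\ref{thm}.
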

 
\subsection*{Asymmetric Hyperbolic Manifolds}

\begin{lemma}\label{l:asymm} Given integers $r>0$ and $d\ge 3$, one can construct a complete, torsion-free, co-Hopfian
(Gromov) hyperbolic
group $A(d)$ and a malnormal free subgroup $L<A(d)$ of rank $r$ such that
\begin{enumerate}
\item $A(d)$ has a finite classifying space of dimension $d$,
\item $A(d)$ does not split over any free group, and 
\item $A(d)/\<\!\<L\>\!\>$ is infinite.
\end{enumerate} 
\end{lemma}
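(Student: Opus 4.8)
\medskip\noindent\textit{Proof plan.}~The plan is to build one group that serves as $A(d)$ for every $d\ge 3$ at once. Let $M$ be a closed orientable hyperbolic $3$–manifold that is \emph{asymmetric}, i.e.\ $\mathrm{Isom}(M)=1$, and \emph{non-Haken}; the existence of such an $M$, produced by Dehn surgery on the knots of Figure~1, is the only substantial geometric input and is established separately. Put $A(d):=\pi_1M$. Then $\pi_1M$ is torsion-free and non-elementary Gromov hyperbolic, and $M$ is a finite $3$–dimensional classifying space; since a finite $3$–dimensional classifying space is also a finite $d$–dimensional one (take its product with $[0,1]^{d-3}$), this yields item~(1). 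Finally $\pi_1M$ has trivial centre, being non-elementary hyperbolic, and by Mostow rigidity $\Out(\pi_1M)\isom\mathrm{Isom}(M)=1$; hence $A(d)$ is complete.

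To see that $\pi_1M$ is co-Hopfian, consider an injective endomorphism: its image $H$ has cohomological dimension $3$ and lies in the Poincar\'e-duality group $\pi_1M$, so by Strebel's theorem $H$ has finite index; the corresponding finite cover $M_H$ is a closed hyperbolic $3$–manifold with $\pi_1M_H\isom\pi_1M$, so $\mathrm{vol}(M_H)=\mathrm{vol}(M)$ by Mostow rigidity, while $\mathrm{vol}(M_H)=[\pi_1M:H]\cdot\mathrm{vol}(M)$; hence $[\pi_1M:H]=1$ and the endomorphism is onto. (Alternatively, quote Sela's theorem that a torsion-free hyperbolic group with no essential cyclic splitting is co-Hopfian.) For item~(2): since $M$ is closed, orientable and irreducible, every incompressible surface in it is closed, so $\pi_1M$ can split over a finitely generated subgroup only over a closed surface group or over $\Z^2$, never over a free group; and since $M$ is moreover non-Haken there is in fact no nontrivial splitting at all. (A purely homological alternative: a Mayer--Vietoris computation in cohomology with $\Z\pi_1M$–coefficients shows that a $\mathrm{PD}_n$ group with $n\ge 3$ cannot split over a subgroup of cohomological dimension $\le 1$, hence not over a free group.)

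It remains to exhibit the malnormal free subgroup $L<A(d)$ of rank $r$ with $A(d)/\langle\!\langle L\rangle\!\rangle$ infinite, and this is the step requiring foresight, since a ``typical'' finitely generated free subgroup normally generates the whole group. I would pick primitive loxodromic elements $u_1,\dots,u_r\in\pi_1M$ lying in $r$ pairwise non-conjugate maximal cyclic subgroups. In a torsion-free hyperbolic group every maximal cyclic subgroup is malnormal, and a nontrivial element lies in a unique maximal cyclic subgroup, so $\{\langle u_1\rangle,\dots,\langle u_r\rangle\}$ is a malnormal, hence hyperbolically embedded, family. By the Dehn filling theorem for hyperbolic groups (Gromov; Dahmani--Guirardel--Osin), for all sufficiently large $k$ the subgroup $L:=\langle u_1^{\,k},\dots,u_r^{\,k}\rangle$ is free of rank $r$, quasiconvex and malnormal in $\pi_1M$ --- freeness and quasiconvexity by ping-pong on the axes, malnormality because the $\langle u_i\rangle$ form a malnormal family --- while $\pi_1M/\langle\!\langle L\rangle\!\rangle=\pi_1M/\langle\!\langle u_1^{\,k},\dots,u_r^{\,k}\rangle\!\rangle$ is a non-elementary hyperbolic group, in particular infinite. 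This is item~(3).

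The deepest ingredient is the one isolated at the start --- the existence of an asymmetric non-Haken closed hyperbolic $3$–manifold --- and it is imported rather than proved here. Within the argument itself, the single delicate point is the simultaneous arrangement in the previous paragraph that $L$ be free of the prescribed rank, be malnormal, and have normal closure of infinite index; the apparent tension between the last two conditions is dissolved by the Dehn filling technology. Nothing further is needed to pass from $d=3$ to $d>3$ (alternatively, one could take $A(d)$ to be the fundamental group of an asymmetric closed hyperbolic $d$–manifold, which exists by Belolipetsky--Lubotzky).
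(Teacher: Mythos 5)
Most of your argument runs parallel to the paper's: the source of $A(d)$ is an asymmetric closed hyperbolic manifold (the paper allows genuinely $d$-dimensional examples via Belolipetsky--Lubotzky, which matters later when one wants $A(d_1)\not\cong A(d_2)$, but your $M\times[0,1]^{d-3}$ reading does satisfy the literal statement of (1)); your co-Hopf argument via Strebel plus volume is the paper's argument; and your parenthetical homological proof of (2) is exactly the paper's Mayer--Vietoris/Poincar\'e duality argument (the non-Haken hypothesis is not needed for this lemma and is reserved in the paper for the later hopficity statement).

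The genuine gap is in item (3): the subgroup $L=\langle u_1^{\,k},\dots,u_r^{\,k}\rangle$ is \emph{not} malnormal for $k\ge 2$, and this is exactly the property the lemma demands (and the property the main theorem later uses to control arc stabilizers in the Bass--Serre tree). Indeed $u_1$ centralizes $u_1^{\,k}$, so $u_1 L u_1^{-1}\cap L\supseteq\langle u_1^{\,k}\rangle\neq 1$; but $u_1\notin L$, since $L$ is free on the $u_i^{\,k}$ and any element of $L\cap\langle u_1\rangle$ commutes with $u_1^{\,k}$, hence lies in $C_L(u_1^{\,k})=\langle u_1^{\,k}\rangle$. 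Dehn filling does make the quotient by $\langle\!\langle L\rangle\!\rangle$ infinite and makes the \emph{family} $\{\langle u_i\rangle\}$ malnormal, but it cannot make a subgroup generated by proper powers malnormal; the ``tension'' you flag is not dissolved by this route. The paper resolves it differently: every non-elementary hyperbolic group contains a proper normal subgroup $N$ of infinite index, and by Kapovich's theorem \cite{K} one can find inside $N$ a rank-$2$ free subgroup that is malnormal in the ambient group, and inside that a malnormal subgroup of any finite rank $r$ (malnormality being transitive). Since $L\le N$ with $N$ normal, $\langle\!\langle L\rangle\!\rangle\le N$, so $A(d)/\langle\!\langle L\rangle\!\rangle$ surjects onto the infinite group $A(d)/N$. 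You should replace your construction of $L$ by this one, or by some other device that produces generators which are not proper powers and whose span is genuinely malnormal.
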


\begin{proof} By Mostow
rigidity, if $M$ is a closed orientable hyperbolic manifold of dimension $d\ge 3$,
then $\pi_1M$ is complete (asymmetric)
if and only if $M$ is asymmetric, i.e.~$M$ has
no non-trivial isometries. There exist such manifolds in every dimension $d\ge 3$: Kojima \cite{koji}
constructed examples in dimension
$3$ (along with manifolds that have any prescribed finite group of symmetries)  and, inspired by arguments of Long and Reid \cite{LR}, 
Belolipetsky and Lubotzky \cite{BL} constructed examples in each dimension $d\ge 3$. 
Let $M$ be such a $d$-manifold, and let $A(d)=\pi_1M$. Note 
that $M$ is a classifying space for $A(d)$.

If $A(d)$ splits non-trivially as an amalgamated free product, 
say $A(d)= D\ast_C B$, then there is a Mayer–Vietoris exact sequence for integral homology groups
$$
\cdots H_dD\oplus H_dB \to H_dM \to H_{d-1}C \cdots
$$ 
By Poincar\'e duality, $H_dM\cong \mathbb{Z}$
and (as $D, B < \pi_1M$ are of infinite index) $H_dD = H_dB = 0$.
Thus $H_{d-1}C$ is infinite. 
In particular, since $d\ge 3$, the group $C$ cannot be free. 
A similar argument shows that $\pi_1M$
does not split as an HNN extension over a free group either.

Any subgroup of infinite
index in $A(d)$ has lesser cohomological dimension than $A(d)$, and a subgroup of finite index cannot be isomorphic to $A(d)$ by Mostow rigidity. 
Thus $A(d)$ is co-Hopfian. 

Every non-elementary 
hyperbolic group contains proper, normal subgroups of infinite index, 
and Kapovich \cite{K} shows that inside such a
subgroup one can find a malnormal free subgroup of rank $2$, and inside that one can find a malnormal subgroup of any finite rank. 
\end{proof}

\section{The main argument}
 
Given a countable group $G$, we modify it  
to arrive in the situation $G = \<F_1,F_2\>$ described in Corollary \ref{r:f12}.
Let $r_i$ be the rank of the free group $F_i$ and construct groups $A_i=A(d_i)$ with malnormal
free subgroups $L_i<A_i$ of rank $r_i$ as in Lemma \ref{l:asymm}. We may assume that
$A_1$ is not isomorphic to $A_2$ (Remark \ref{r22}).
Let 
\begin{equation}\label{G*}
G^* = A_1\ast_{F_1} G\ast_{F_2} A_2,
\end{equation}
where the amalgamation identifies $F_i< G$ with $L_i<A_i$.
Note that since $G=\<F_1,F_2\>$, we have $G^*=\<A_1,A_2\>$.

\begin{theorem}\label{t:rigid}
$G^*$ is complete.
\end{theorem}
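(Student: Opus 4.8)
The plan is to show that an arbitrary epimorphism $\psi\colon G^*\to G^*$ is the identity; since $G^*$ has trivial centre (being an amalgam in which one factor, say $A_1$, has trivial centre and embeds with malnormal edge group), this proves completeness. The argument has three stages: first show that $\psi$ respects the decomposition \eqref{G*}, i.e.\ that each $A_i$ is sent into a conjugate of itself; then use co-Hopficity of the $A_i$ to upgrade this to $\psi(A_i)$ equalling a conjugate of $A_i$; finally conjugate $\psi$ so that it fixes $A_1\ast_{F_1}G$ pointwise, and conclude using $G^*=\<A_1,A_2\>$ together with hopficity of $G^*$.

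For the first stage, I would use Bass--Serre theory. The group $G^*$ acts on the Bass--Serre tree $T$ of the decomposition \eqref{G*}, and via $\psi$ it acts on $T$ again (precompose the action with $\psi$). The key point is that $A_1$ and $A_2$ have property {\rm FA}: each $A(d)$ is a hyperbolic group that does not split over any free group, and a finitely generated group that fixes no point of an arbitrary simplicial tree must split over the stabiliser of an edge, which here is free (an edge group of \eqref{G*} is conjugate into some $F_i$, hence free) — so in fact $A(d)$ has {\rm FA}. Therefore $\psi(A_i)$ fixes a vertex of $T$, i.e.\ $\psi(A_i)$ lies in a conjugate of $A_1$, of $G$, or of $A_2$. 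One then rules out the ``wrong'' possibilities: $\psi(A_i)$ cannot be conjugate into $G$ because $G$ is ``small'' relative to $A_i$ in a way that would contradict surjectivity of $\psi$ — more precisely, if every $\psi(A_i)$ were conjugate into $G$ then the image $\<\psi(A_1),\psi(A_2)\>=G^*$ would be contained in a subgroup generated by conjugates of $G$ inside one vertex orbit, contradicting that $G$ has infinite index (indeed $A_i/\<\!\<L_i\>\!\>$ is infinite, so $A_i\not<$ any conjugate of $G$). And $\psi(A_1)$ cannot be conjugate into $A_2$ (nor $\psi(A_2)$ into $A_1$): the composite $A_1\to A_2$ would be a homomorphism between distinct hyperbolic groups whose images, combined, still have to generate all of $G^*$; one uses that $A_1\not\cong A_2$ together with co-Hopficity to see the images cannot both land in conjugates of a single $A_j$ and surject. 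Hence, after relabelling, $\psi(A_1)$ is conjugate into $A_1$ and $\psi(A_2)$ into $A_2$.

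For the second stage: $\psi(A_i)$ is a subgroup of (a conjugate of) $A_i$, and since $\psi$ is surjective onto $G^*$ and $G^*=\<A_1,A_2\>$, one checks that $\psi(A_i)$ is in fact a subgroup of $A_i$ of finite index, or better, that it must equal $A_i$; here co-Hopficity of $A_i$ does the work — a proper subgroup of $A_i$ isomorphic to $A_i$ cannot exist, and $\psi(A_i)$ receives an epimorphism structure forcing $\psi|_{A_i}$ to be an automorphism of $A_i$ onto a conjugate of $A_i$. Since $A_i$ is complete, after conjugating we may assume $\psi|_{A_i}=\mathrm{id}_{A_i}$ for both $i$ simultaneously — the two conjugating elements can be reconciled because once $\psi|_{A_1}=\mathrm{id}$, any further conjugation fixing $A_1$ lies in the centralizer of $A_1$ in $G^*$, which is trivial by malnormality of $F_1$ and triviality of the centralizer of $F_1\cap F_2$ in $G$ (Corollary \ref{r:f12}). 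Now $\psi$ restricts to the identity on $\<A_1,A_2\>=G^*$, so $\psi=\mathrm{id}$.

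The main obstacle I expect is the first stage — specifically, cleanly ruling out that $\psi(A_i)$ is conjugate into $G$ or into the other $A_j$. Property {\rm FA} gives a vertex of $T$ fixed by $\psi(A_i)$ for free, but extracting a contradiction from the ``bad'' placements requires genuinely using the surjectivity of $\psi$ together with the structural facts that $A_i/\<\!\<L_i\>\!\>$ is infinite (so $A_i$ is not absorbed by $G$), that $A_1\not\cong A_2$, and that $F_1\cap F_2$ is non-cyclic with trivial centralizer in $G$ (to control what the edge groups can be). Marshalling these into a proof that $\psi$ must permute the factors, and does so trivially, is the crux; the rest is bookkeeping with co-Hopficity, hopficity, and completeness.
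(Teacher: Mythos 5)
Your overall architecture --- the Bass--Serre tree of \eqref{G*}, fixed vertices for the images of the $A_i$, the quotients $A_i/\<\!\<L_i\>\!\>$ to force each $A_i$ into a conjugate of itself, co-Hopficity plus $A_1\not\cong A_2$, normalisation using completeness of the $A_i$, and the trivial centraliser of $F_1\cap F_2$ --- is the paper's. But there is a genuine gap at the very first step, caused by your decision to run the argument for an arbitrary epimorphism $\psi$ rather than an automorphism. You justify the fixed-vertex claim by asserting that ``$A(d)$ does not split over a free group'' implies that $A(d)$ has property FA, because the edge stabilisers of the tree for \eqref{G*} are free. This inference is doubly broken. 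First, the group acting is $\psi(A_i)$, a possibly proper quotient of $A_i$: if it acts without a fixed point, then it splits over a free edge stabiliser, but $A_i$ itself then splits only over the $\psi$-preimage of that stabiliser, which need not be free, so no contradiction with Lemma \ref{l:asymm}(2) results. Second, even as a statement about $A_i$ itself, ``no splitting over a free group'' is strictly weaker than FA: Kojima's asymmetric hyperbolic $3$-manifolds (a source of $A(3)$ in Lemma \ref{l:asymm}) are built by gluing along totally geodesic closed surfaces, so their fundamental groups split over surface groups and act on trees with no fixed point. This is exactly why the paper proves completeness only for \emph{automorphisms} $\phi$ --- there $\phi(A_i)\cong A_i$, so a fixed-point-free action really would split $A_i$ over a subgroup of a conjugate of some $F_j$, i.e.\ over a free group --- and defers hopficity to a separate proposition whose hypothesis is that the $A_i$ genuinely have FA, supplied by the non-Haken manifolds of Theorem \ref{p:3hopf}, not by Lemma \ref{l:asymm}.

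A second, related gap in the epimorphism version: once $\psi(A_i)$ lands in a conjugate of $A_i$, co-Hopficity alone does not make $\psi|_{A_i}$ injective --- co-Hopficity says an \emph{injective} endomorphism is onto, whereas $\psi|_{A_i}$ could a priori have a kernel. The paper needs Lemma \ref{l:proper} (Scott's core theorem plus Hirshon's result) to exclude this, and that is again a $3$-manifold statement unavailable for the general $A(d)$. If you restrict your argument to automorphisms, both gaps disappear and what remains is essentially the paper's proof. Two smaller points to tighten even then: the reason a pair of conjugates of vertex groups other than an $(A_1,A_2)$-pair cannot generate is that $G^*/\<\!\<G,A_2\>\!\>\cong Q_1$ (and symmetrically $Q_2$) is infinite, not that conjugates of $G$ ``lie in one vertex orbit''; and in the final reconciliation the relevant element centralises only $F_1\cap F_2$, not $A_1$, so one needs the diameter-$2$ fixed-subtree argument to see that its centraliser in all of $G^*$ is contained in $G$ and hence trivial.
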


\begin{proof} The centre of $G^*$ is trivial because the centre of any amalgamated
free product lies in the intersection of the edge groups, and the edge groups in the 
defining decomposition of $G^*$ are centreless. 

Let $\phi : G^*\to G^*$ be an automorphism; we must argue that $\phi$ is inner.
As $A_i$ does not split over a free group, the action of $\phi(A_i)$ on the Bass-Serre tree of the given splitting of $G^*$
must fix a vertex. Thus each of $\phi(A_1)$ and $\phi(A_2)$ is contained in a conjugate of $A_1, A_2$ or $G$.

 We have chosen
the $A_i$ so that $Q_i:=A_i/\<\!\<F_i\>\!\>$ is infinite. Note that $Q_1\cong G^*/\<\!\<G, A_2\>\!\>$ 
and $Q_2\cong G^*/\<\!\<G, A_1\>\!\>$. Thus a pair of conjugates of $A_1, A_2$ or $G$ can  only generate $G^*$
if one of the pair is a conjugate of $A_1$ and the other is a conjugate of $A_2$. 
If $\phi$ maps $A_1$ into a conjugate of $A_2$, then 
$\phi^2$ would map $A_1$ to a conjugate of itself, and since 
$A_1$ is co-Hopfian, the image would be the whole of this conjugate. This forces
$\phi(A_1)$ to be equal to the conjugate of $A_2$ containing it, which is impossible since we have chosen
$A_1$ and $A_2$ to be not isomorphic. We conclude that $\phi$
maps $A_i$ isomorphically onto a conjugate of itself for $i=1,2$.

Now, since  all automorphisms of $A_1$ are assumed to be inner, 
we can compose $\phi$ with
an inner automorphism of $G^*$ to assume that $\phi|_{A_1}={\rm{id}}_{A_1}$,
while $\phi(A_2)=A_2^\gamma$ for some $\gamma\in G^*$. 

Consider the Bass-Serre tree for the splitting 
$G^* = A_1\ast_{F_1} G\ast_{F_2} A_2.$ We refer to the vertices as being of
type $A_1, A_2$ or $G$, according to whether they are in the 
$G^*$ orbit of the vertices (identity cosets) $A_1, A_2$ or $G$, respectively.
Since $F_i<A_i$ is malnormal for $i=1,2$, no arc of length 
greater than $2$ in this  tree has non-trivial
stabilizer, and any arc of length $2$
with non-trivial stabilizer must be centred at a vertex of type $G$. In particular, the
subtree fixed by $F_1\cap F_2$, which contains the vertices $A_1, G$ and $A_2$, has
diameter $2$ and centroid $G$. The centraliser of $F_1\cap F_2$ in $G^*$  leaves this subtree and
its centroid invariant, and hence is contained in $G$. But, by construction, the centraliser of 
$F_1\cap F_2$ in $G$ is trivial, and hence so is its centraliser in $G^*$.

As $\phi(A_2)=A_2^\gamma$, 
we have an isomorphism ${\rm{ad}}(\gamma)^{-1}\circ \phi|_{A_2}:A_2\to A_2$. As $A_2$ is complete,
this isomorphism is conjugation by some $a\in A_2$, so  $\phi|_{A_2}$ is conjugation by $\gamma a\in G^*$. 
But $\phi$ restricts to the identity 
on ${F_1\cap F_2}<A_2$, and we know that
the centralizer of $F_1\cap F_2$ in $G^*$ is trivial, so  $\gamma=a^{-1}$ and $\phi|_{A_2}={\rm{id}}_{A_2}$.
  
As $G^*$ is generated by $A_1\cup A_2$, we conclude that $\phi$ 
(previously adjusted by a conjugacy to ensure that $\phi_{A_1}={\rm{id}}_{A_1}$) 
is the identity, and the theorem is proved.
\end{proof}

\begin{remark}\label{r22}
 In the theorem above, we required $A_1\not\cong A_2$. An easy way to arrange this is to take $A_i=A(d_i)$ from Lemma \ref{l:asymm} with $d_1\neq d_2$.
But one is also free to take both $A_i$ to have the same dimension $d\ge 3$, appealing to
\cite{BL} or Theorem \ref{p:3hopf} below. 
With this second choice, if $G$ has 
geometric (or cohomological) dimension $D$, then $G^*$ will have
geometric (or cohomological) dimension $\max\{D, d\}$.
\end{remark}

\section{Asymmetric hyperbolic 3-manifolds}\label{s:3folds}

A closed orientable 3-manifold $M$ is {\em{Haken}} if it is irreducible   
and contains a closed incompressible
surface, i.e. a closed surface 
of positive genus such that the inclusion map $S\hookrightarrow M$ induces a
monomorphism of groups $\pi_1S\hookrightarrow \pi_1M$. If $M$ contains
such a surface,
then $\pi_1M$ acts without a fixed point on the tree $T$ that is obtained from
the universal covering $p:\tilde M\to M$ as follows: the
vertex set of $T$ is 
the set of connected components of $\tilde M \ssm p^{-1}(S)$; two vertices are connected by an edge if the components that they represent abut along a component of $p^{-1}(S)$; and the action of $\pi_1M$ on $T$ is induced by the action
of $\pi_1M$ on $\tilde M$ by deck transformations. 

In the opposite direction, John Stallings proved that
if $M$ is irreducible and $\pi_1M$ acts without a fixed point on a simplical tree, then $M$ contains an
incompressible surface; see \cite{stall}. In particular, a closed orientable 3-manifold $M$ that is  aspherical
will be non-Haken if and only if $\pi_1M$ has Serre's property {\rm{FA}}. 

Our purpose in this section is to explain how well-known facts about hyperbolic
3-manifolds imply the following result, which will be familiar to experts.

\begin{theorem} \label{p:3hopf}
There exist infinitely many distinct, closed, asymmetric,  hyperbolic 3-manifolds
$M$ such that $\pi_1M$ has property {\rm{FA}}.
\end{theorem}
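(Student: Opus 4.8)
The plan is to produce the manifolds by Dehn surgery on a hyperbolic knot $K\subset S^3$ of the kind shown in Figure 1: such a $K$ is chosen so that its complement $X_K=S^3\ssm K$ is hyperbolic, \emph{small} (it contains no closed essential surface) and \emph{asymmetric}, meaning that $\mathrm{Isom}(X_K)$ --- which by Mostow--Prasad rigidity equals $\Out(\pi_1X_K)$ --- is trivial; these properties can be verified directly (for instance with SnapPy). Granting this, recall from the discussion preceding the theorem that a closed hyperbolic $3$-manifold $M$ is aspherical and irreducible, so $\pi_1M$ has property {\rm{FA}} if and only if $M$ is non-Haken. Hence it suffices to find infinitely many pairwise non-homeomorphic closed hyperbolic $3$-manifolds that are asymmetric and non-Haken, and I would exhibit these among the fillings $X_K(\alpha)$.

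First I would apply Thurston's hyperbolic Dehn surgery theorem: all but finitely many fillings $X_K(\alpha)$ are closed hyperbolic $3$-manifolds. Writing the slopes as $\alpha=p/q$ with a fixed $q$ and letting $p\to\infty$, one obtains infinitely many such fillings with $H_1(X_K(\alpha);\Z)$ cyclic of order $|p|$, hence infinitely many distinct homeomorphism types.

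Next I would rule out incompressible surfaces in the generic filling. If $S\subset X_K(\alpha)$ were a closed incompressible surface, one could isotope it to meet the filling solid torus $V$ in a family of meridian discs; if this family can be made empty then $S$ is essential in $X_K$, contradicting smallness, while otherwise the standard surface--minimisation argument makes $S\cap X_K$ an essential surface in $X_K$ with boundary slope $\alpha$. By Hatcher's theorem that a compact orientable irreducible $3$-manifold with torus boundary has only finitely many boundary slopes, this excludes only finitely many $\alpha$; so for all but finitely many $\alpha$ the manifold $X_K(\alpha)$ is irreducible (being hyperbolic) and contains no closed incompressible surface, hence is non-Haken.

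Finally I would control the symmetries. By Thurston's theorem the core geodesic $\gamma_\alpha$ of $V$ has length tending to $0$, and for all but finitely many $\alpha$ it is the unique shortest closed geodesic of $X_K(\alpha)$; consequently every isometry of $X_K(\alpha)$ carries $\gamma_\alpha$ to itself, and so restricts to a self-homeomorphism of $X_K(\alpha)\ssm\gamma_\alpha$, a space homeomorphic to $X_K$. Since $K$ is asymmetric this homeomorphism is isotopic to the identity, so the isometry induces the identity on $\pi_1(X_K(\alpha)\ssm\gamma_\alpha)$, hence on its quotient $\pi_1 X_K(\alpha)$, and Mostow rigidity then forces it to be the identity isometry of $X_K(\alpha)$; alternatively one may quote Kojima's description \cite{koji} of $\mathrm{Isom}(X_K(\alpha))$, for generic $\alpha$, as the group of slope-preserving symmetries of $X_K$, which is trivial here. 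Intersecting the three ``all but finitely many'' conditions and keeping the infinitely many homeomorphism types found in the second step finishes the proof. The step I expect to be the main obstacle is this last one --- showing that the core of the surgery solid torus is the unique shortest geodesic for all but finitely many $\alpha$, and that a symmetry-free cusped complement forces the generic filling to be symmetry-free; this is exactly where Mostow rigidity, the geometric convergence of the fillings to $X_K$, and the genuine asymmetry of $K$ (rather than, say, a $2$-bridge symmetry, which would survive every filling) all have to be combined.
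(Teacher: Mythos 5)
Your proposal is correct and follows essentially the same route as the paper: surgery on a small, asymmetric hyperbolic knot, with Thurston's Dehn surgery theorem giving hyperbolicity, Hatcher's finiteness of boundary slopes giving non-Haken (hence FA via Stallings), and the Kojima-style argument that the core of the filling torus is the unique shortest geodesic, so that isometries restrict to the asymmetric cusped complement and must be trivial. The only cosmetic differences are that the paper certifies smallness and asymmetry of the four knots by citing the Henry--Weeks and Burton--Coward--Tillmann censuses rather than a SnapPy computation, and it leaves the distinctness of the resulting manifolds implicit where you supply the $H_1\cong\Z/p$ argument.
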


\begin{proof} In the light of the preceding discussion, what we must show is that 
there are infinitely many distinct hyperbolic 3-manifolds that are asymmetric and
non-Haken. The manifolds that we shall describe are obtained by Dehn surgery on 
the knots shown in figure 1. These are the only four knots $K$
with at most $10$ crossings
that have the three properties that we are interested in: first, each $K$ is  prime and
alternating (and non-torus), hence hyperbolic \cite{menasco}; second, the knot complement
$\mathbb{S}^3\smallsetminus K$ has no non-trivial symmetries, so by Mostow rigidity its
fundamental group 
is complete (asymmetric); and third, $\mathbb{S}^3\smallsetminus K$ is {\em{small}},
i.e. contains no
closed incompressible surface other than the tori parallel to 
the boundary of a regular neighbourhood of $K$. The first of these properties is immediately visible
in the diagrams, the second is established in the census of Henry and Weeks \cite{HW} 
who calculated
the symmetry groups of all knots up to 10 crossings (alternatively
Kodama and Sakuma \cite{KS}), and the third is recorded
in the census \cite{BCT} of Burton, Coward and Tillmann,
who calculated all knots with at most 12 crossings that are small.

\begin{figure}[h]
\begin{center} 
	\includegraphics[scale=0.35]{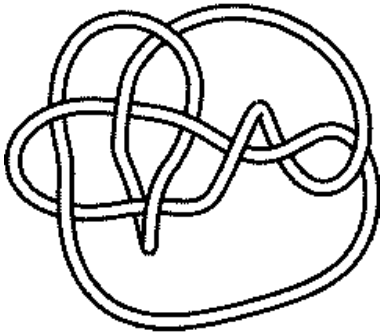} 
	\includegraphics[scale=0.35]{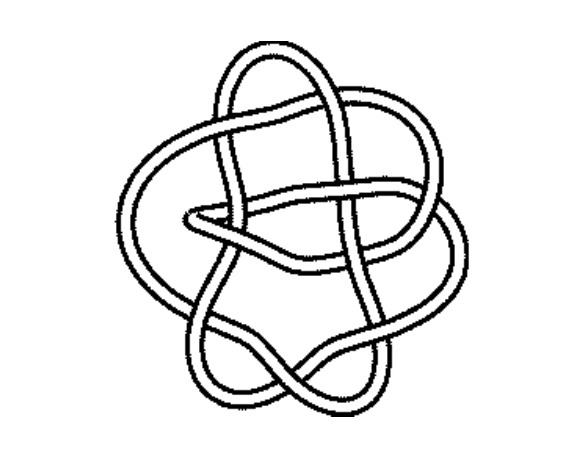}
	\includegraphics[scale=0.35]{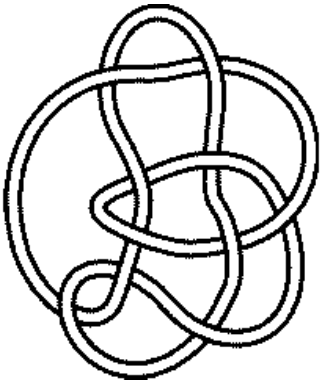}
	\includegraphics[scale=0.35]{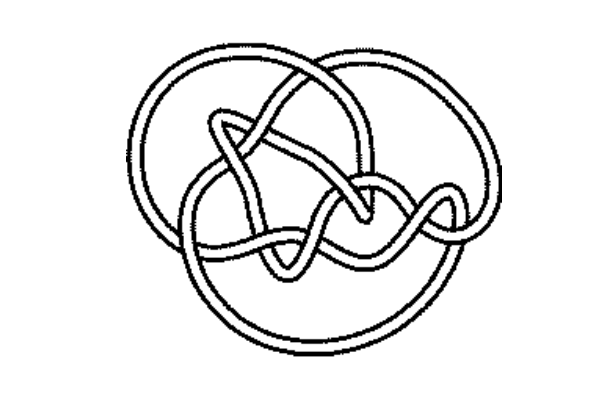}
 \caption{The knots $10_{102}, 10_{106}, 10_{107}$ and $10_{110}$}
 \label{fig1}
\end{center}
\end{figure}

Thurston's celebrated Dehn surgery theorem \cite{wpt} states that all but finitely
many of the closed manifolds obtained by Dehn surgery on
these knots will support a hyperbolic
metric.  And Hatcher's theorem on boundary slopes
\cite{hatcher} implies that all but finitely many of these closed manifolds will
be non-Haken. Thus we will be done if we can argue that all but finitely many 
of these closed hyperbolic manifolds are asymmetric. And this is what 
Kojima's arguments show \cite{koji}, as we shall now explain.

In Dehn filling, 
one starts from the manifold $M_K$ with torus boundary obtained by removing an open
tubular neighbourhood of a hyperbolic knot $K$ in  $\mathbb{S}^3$. A framing of the knot
gives an identification 
$\Z^2=\pi_1\partial M_k$. Given a reduced fraction $p/q$, one attaches
a thickened 2-disc $D$ to an annular neighbourhood of a simple closed curve on 
$\partial M_K$ representing the homotopy class  $(p,q)\in \Z^2$. The boundary of the resulting manifold $M_K\cup D$ is a 2-sphere, which one caps off with a 3-ball to
obtained the  manifold $M_K(p,q)$. (Note that the union of $D$ and this
3-ball is a solid torus.)  For all but finitely many choices
of $p/q$, Thurston \cite{wpt} provides a hyperbolic metric on $M_K(p,q)$, and by Mostow rigidity
this metric is unique.  
As $|p|+|q|\to \infty$  this metric  
converges to the complete hyperbolic metric on $M_K$, and when $|p|+|q|$ is sufficiently large, the
unique shortest geodesic in  $M_K(p,q)$ is the  core of the solid torus added during Dehn filling \cite{wpt}.
Thus, by restriction, we obtain a homomorphism ${\rm{Isom}}(M_K(p,q))\to {\rm{Homeo}}(M_K')$, 
where $M_K'$ is the interior of $M_K$.
By Mostow rigidity, every homeomorphism of $M_K'$ is homotopic to a unique isometry of the
complete hyperbolic metric on $M_K'$, so passing to homotopy classes we have a homomorphism
$\rho:{\rm{Isom}}(M_K(p,q))\to {\rm{Isom}}(M_K')$ (where the isometries of $M_K'$ are with respect to
its complete hyperbolic metric, not the restriction of the metric on  $M_K(p,q)$). 
Mostow rigidity also tells us that, for any complete, finite-volume hyperbolic $3$-manifold $N$,
 the natural map ${\rm{Isom}}(N)\to {\rm{Out}}(\pi_1N)$ is an isomorphism. Thus an isometry $\phi$ of
 $M_K(p,q)$ will lie in the kernel of $\rho$ only if its restriction to $M_K'$ induces an inner automorphism of $\pi_1 M_K$.
 But if this is the case, then the map that $\phi$ induces on $\pi_1M_K(p,q)$ (a quotient of  $\pi_1 M_K'$)
 will also be inner, and therefore $\phi$ is trivial. Thus $\rho:{\rm{Isom}}(M_K(p,q))\to {\rm{Isom}}(M_K')$  
 is injective. In particular, for $K$  with $\pi_1M_K$ complete,  the triviality of ${\rm{Isom}}(M_K')\cong{\rm{Out}}(\pi_1M')$
 implies that 
$ {\rm{Out}}(\pi_1M_K(p,q))\cong 
{ \rm{Isom}}(M_K(p,q))$ is trivial when $|p|+|q|$ is sufficiently large.  
\end{proof}

We shall also need the following lemma. 
Note that a finitely generated group with property FA has finite abelianisation.

\begin{lemma}\label{l:proper}
Let $M$ be a closed hyperbolic 3-manifold. If $H_1M$ is finite, then every
non-trivial homomorphism $f: \pi_1M\to \pi_1M$ is an isomorphism.
\end{lemma}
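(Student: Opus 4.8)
The plan is to use that $\pi_1 M$ is a torsion-free $PD_3$-group with finite abelianisation, and to reduce the assertion to excluding an endomorphism that is at once non-injective and non-surjective.

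Write $G=\pi_1 M$. Since $M$ is a closed orientable aspherical $3$-manifold, $G$ is a torsion-free orientable $PD_3$-group; as $H_1 M$ is finite, Poincar\'e duality gives $H_2(G;\Z)\isom H^1(G;\Z)=0$, so $G$ has the rational homology of $\mathbb{S}^3$. Recall also that $G$ is residually finite (geometrization), hence Hopfian — so a nontrivial surjective endomorphism is automatically an isomorphism — and co-Hopfian: a subgroup of infinite index has cohomological dimension $<3$, while a proper finite-index subgroup is $\pi_1$ of a nontrivial finite cover $\widehat{M}$, a closed hyperbolic $3$-manifold which, were it isomorphic to $G$, would be homeomorphic to $M$ by Mostow rigidity — impossible on volume grounds. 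So it is enough to show that a nontrivial $f$ is injective or surjective.

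So assume $f$ is not injective; then $N:=\ker f$ is an infinite normal subgroup, and $H:=f(G)$ is a quotient of $G$, hence has finite abelianisation. First I would show $[G:H]<\infty$. If not, then $\mathrm{cd}\,H\le 2$ (Strebel), and by Scott's compact core theorem $H=\pi_1 C$ for a compact $3$-manifold $C$; since $b_1(H)=0$, ``half lives, half dies'' forces $\partial C$ to have no positive-genus component, so after capping $2$-spheres $C$ is closed, and then the prime decomposition together with geometrization — each aspherical prime summand would contribute a $PD_3$ subgroup, impossible inside $G$ at infinite index — forces $H$ to be free, hence trivial; but $f\neq 0$. Thus $[G:H]<\infty$, so $H=\pi_1\widehat{M}$ for a closed hyperbolic finite cover $\widehat{M}\to M$, and in particular $H$ is a $PD_3$-group. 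Now realise $f$ by a self-map $F$ of $M$; it factors as $F=p\circ\bar q$, with $p\colon\widehat{M}\to M$ the $[G:H]$-fold covering and $\bar q\colon M\to\widehat{M}$ inducing $G\twoheadrightarrow H$, so $\deg F=[G:H]\cdot\deg\bar q$. Because $\|M\|>0$, Gromov's inequality gives $|\deg F|\le 1$; if $\deg F=\pm1$ then $[G:H]=1$ and $f$ is surjective, so we may assume $\deg F=0$ and hence $\deg\bar q=0$.

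The hard part is this last configuration: a degree-zero, $\pi_1$-surjective map $\bar q\colon M\to\widehat{M}$ onto a finite cover, with nontrivial kernel $N$. To rule it out I would pass to the Kleinian picture $G<\mathrm{Isom}^+\mathbb{H}^3$. As $G$ is cocompact it acts minimally on $\partial\mathbb{H}^3=\mathbb{S}^2$, so the nonempty $G$-invariant closed set $\Lambda(N)$ equals $\mathbb{S}^2$; but $[G:N]=|H|=\infty$, so $\mathbb{H}^3/N$ is a complete infinite-volume hyperbolic $3$-manifold whose limit set is all of $\mathbb{S}^2$, hence has only degenerate ends. By the Tameness Theorem and the Thurston--Canary covering theorem this forces $N$ to be (commensurable with) the fibre subgroup of a surface-bundle structure on a finite cover of $M$, so that $H=G/N$ is virtually infinite cyclic — contradicting $\mathrm{cd}\,H=3$. (A point to check is that the covering theorem is normally invoked for finitely generated $N$; here $N$ is finitely generated, which can be extracted from the same tameness circle of ideas, so the argument goes through.) Hence $f$ is injective, and by co-Hopficity an isomorphism.
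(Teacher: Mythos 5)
Your reduction to the case where $H=f(G)$ has finite index is essentially sound, though heavier than necessary: the paper reaches the same conclusion by observing that the compact core $C$ must have a boundary component of positive genus, so ``half lives, half dies'' makes $H_1$ of the corresponding cover infinite, contradicting the fact that $H$ is a quotient of a group with finite abelianisation --- no geometrization or prime decomposition is needed. The genuine gap is in your final step. To invoke the Tameness Theorem and the Canary covering theorem you need $N=\ker f$ to be finitely generated, and your parenthetical claim that this ``can be extracted from the same tameness circle of ideas'' has no justification: kernels of endomorphisms of finitely generated groups need not be finitely generated, and nothing in the tameness literature produces finite generation of a normal subgroup from information about its quotient. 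Worse, in the configuration you are trying to exclude, $N$ \emph{cannot} be finitely generated: if it were, the very covering theorem you cite would force $G/N\cong H$ to be virtually cyclic, whereas $H$ is a finite-index subgroup of $G$ and hence a $PD_3$-group. So your argument is vacuous in exactly the case that matters; what actually has to be excluded is an \emph{infinitely generated} normal subgroup $N$ with $G/N$ isomorphic to a finite-index subgroup of $G$, and limit-set/end arguments give no purchase on that.

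The paper closes this case by a softer mechanism: residual finiteness. By an argument of Hirshon, every homomorphism from a finitely generated, torsion-free, residually finite group onto a finite-index subgroup of itself is injective; Mostow rigidity and the volume of a proper finite cover then force the index to be $1$ (alternatively one can quote \cite{BHM} or \cite{sela}). If you want to keep your structure, replace the Kleinian-group paragraph by this step; the Gromov-norm/degree discussion then becomes superfluous, since the case $[G:H]=1$ is already handled by Hopficity and the case $[G:H]\ge 2$ is exactly what Hirshon's argument disposes of.
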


\begin{proof} 
Scott's compact core theorem \cite{scott} implies that every non-trivial subgroup
of infinite index in $\pi_1M$ has infinite abelianisation, and therefore cannot be
the image of $f$. In more detail, if 
the finitely generated group $G=f(\pi_1M)$ has infinite index, then the
corresponding covering space $M'=\tilde M/G$ of  $M$ has a compact submanifold $C$
such that $C\hookrightarrow M'$ is a homotopy equivalence. A standard argument
using Poincar\'{e}-Lefschetz duality (``half lives, half dies") shows that the rank of $H_1(\partial C)$ is twice the
rank of the image of $H_1(\partial C)$  in $H_1M'$. Since $C$ has at least one component
of positive genus, both $H_1\partial C$ and $H_1M'$ are infinite.

The lemma now follows from the fact that every epimorphism from $\pi_1M$
to a subgroup of finite index in itself is an isomorphism -- see \cite{BHM} or
\cite{sela}. In more detail, an argument of Hirshon \cite{Hirsh} shows that if a finitely
generated, torsion-free group $\Gamma$ is residually finite, then every homomorphism from $\Gamma$
to a subgroup of finite index in itself is injective, and Mostow rigidity implies
that $\pi_1M$ (which is residually finite because it is linear, and torsion-free because $M$ is
aspherical) cannot be isomorphic
to a subgroup of finite index in itself (because such a subgroup has greater covolume).
\end{proof} 

\section{The proof of Theorem \ref{thm}}

Towards proving items (2) and (3) of the theorem,
first note that the process by which a given finitely
generated group $G$ was transformed
into the conditioned state described
in Corollary \ref{r:f12} involved only free products with free groups,
so it preserves the finiteness properties in (3), and any finite subgroup of
the conditioned group is conjugate to a subgroup of the original $G$. This
last property is also true in the case of a countable group that is first embedded
in a finitely generated group using Neumann's embedding (as we noted in section 2).

In the main construction,
we defined $G^* = A_1\ast_{F_1} G\ast_{F_2} A_2$, where the $A_i$ are torsion-free 
and the $F_i$ are finitely generated free groups.  
Every finite subgroup of an amalgamated free product
is conjugate into one of the vertex groups, and
amalgamating groups that are finitely presented (respectively, have a finite
classifying space of dimension at most $d$) along finitely generated free groups preserves these properties.
 Thus (2) is proved, and (3) will be proved if we can argue that both $A_1$ and $A_2$ can be taken
 to be 3-dimensional hyperbolic groups.

In the light of Theorems \ref{t:rigid} and \ref{p:3hopf}, the following proposition
completes the proof of Theorem \ref{thm}.

\begin{proposition} If the complete groups $A_1\not\cong A_2$ used in the construction of $G^*$ 
are the fundamental groups of hyperbolic 3-manifolds that have property ${\rm{FA}}$,  then $G^*$ is hopfian.
\end{proposition}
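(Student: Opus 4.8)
The plan is to repeat the proof of Theorem~\ref{t:rigid}, with the automorphism there replaced by an arbitrary epimorphism $\psi\colon G^*\to G^*$, and with the two features of the $A_i$ that drive that proof — ``$A_i$ does not split over a free group'' and ``$A_i$ is co-Hopfian'' — replaced by the two features now available: each $A_i=\pi_1M_i$ has Serre's property {\rm{FA}}, and, since property {\rm{FA}} makes $H_1M_i$ finite, Lemma~\ref{l:proper} guarantees that every non-trivial homomorphism $A_i\to A_i$ is an isomorphism.

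First I would locate the images $\psi(A_i)$ in the Bass--Serre tree $T$ of the splitting $G^*=A_1\ast_{F_1}G\ast_{F_2}A_2$. Every quotient of a group with property {\rm{FA}} again has property {\rm{FA}}, so the finitely generated group $\psi(A_i)$ has property {\rm{FA}}; hence it fixes a vertex of $T$ and is contained in a conjugate of $A_1$, $A_2$ or $G$. Since $\psi$ is onto and $G^*=\<A_1,A_2\>$, the subgroups $\psi(A_1)$ and $\psi(A_2)$ generate $G^*$, and the argument with the infinite quotients $Q_1\cong G^*/\<\!\<G,A_2\>\!\>$ and $Q_2\cong G^*/\<\!\<G,A_1\>\!\>$ employed in the proof of Theorem~\ref{t:rigid} forces one of $\psi(A_1),\psi(A_2)$ to lie in a conjugate of $A_1$ and the other in a conjugate of $A_2$.

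To remove the ambiguity about which index goes where, I would pass to $\eta:=\psi^2$, still an epimorphism; a one-line computation from the previous paragraph (in the swapped case, $\psi(A_1)\subseteq A_2^{g}$ and $\psi(A_2)\subseteq A_1^{h}$ give $\eta(A_1)=\psi(\psi(A_1))\subseteq\psi(A_2)^{\psi(g)}\subseteq A_1^{h\psi(g)}$, and the unswapped case is even more direct) shows that $\eta(A_i)\subseteq A_i^{c_i}$ for suitable $c_i\in G^*$, with the indices not interchanged. Now Lemma~\ref{l:proper} applies to each composite $\ad(c_i)^{-1}\circ\eta|_{A_i}\colon A_i\to A_i$: it is trivial or an isomorphism, and it cannot be trivial, because if $\eta|_{A_i}$ were trivial then $\eta(G^*)=\<\eta(A_1),\eta(A_2)\>$ would lie in the proper subgroup $A_j^{c_j}$ (where $\{i,j\}=\{1,2\}$), contradicting the surjectivity of $\eta$. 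Hence $\eta$ carries each $A_i$ isomorphically onto a conjugate of itself, which is exactly the situation reached partway through the proof of Theorem~\ref{t:rigid}; the final two paragraphs of that proof then apply verbatim (compose $\eta$ with an inner automorphism so that $\eta|_{A_1}={\rm{id}}$; use the completeness of $A_2$ and the triviality of the centralizer of $F_1\cap F_2$ in $G^*$ to force $\eta|_{A_2}={\rm{id}}$; conclude $\eta={\rm{id}}$ after the adjustment, since $G^*=\<A_1,A_2\>$). Thus $\eta=\psi^2$ is an inner automorphism of $G^*$, in particular injective, so $\psi$ is injective; being also surjective, $\psi$ is an automorphism, and $G^*$ is hopfian.

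The main obstacle is the single point at which an epimorphism behaves differently from an automorphism: proving that $\psi$ (equivalently $\psi^2$) restricts on each $A_i$ to an \emph{isomorphism}, rather than to a homomorphism that might have non-trivial kernel. For an automorphism this is free; here it rests entirely on Lemma~\ref{l:proper} — i.e.\ on $A_i$ being the fundamental group of a hyperbolic $3$-manifold with finite first homology — together with the elementary use of surjectivity to discard the trivial alternative. The only other complication, that $\psi$ might interchange $A_1$ and $A_2$, is disposed of cheaply by passing to $\psi^2$.
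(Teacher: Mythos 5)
Your proposal is correct and follows essentially the same route as the paper: property {\rm{FA}} to place $\psi(A_i)$ in conjugates of vertex groups, the quotients $Q_1,Q_2$ plus surjectivity to pin down the types, passage to $\psi^2$ to kill a possible swap, Lemma~\ref{l:proper} to upgrade the non-trivial restrictions to isomorphisms onto conjugates, and then the endgame of Theorem~\ref{t:rigid}. Your explicit remarks that quotients of {\rm{FA}} groups have {\rm{FA}}, and that injectivity of $\psi^2$ suffices to conclude, only make explicit what the paper leaves implicit.
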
 

\begin{proof} We have $G^* = A_1\ast_{F_1} G \ast_{F_2} A_2$.  
Let $\phi:G^*\to G^*$ be an epimorphism. The action of $\phi(A_1)$ on the Bass-Serre
tree for $G^*$ has a fixed point, as $A_1$ has property {\rm{FA}}, so 
$\phi(A_1)$ lies in a conjugate of one of the vertex groups, $A_1, G, A_2$.
The same argument applies to $A_2$. 

As in the proof of Theorem \ref{t:rigid}, we use the fact that 
a pair of conjugates of $A_1, A_2$ or $G$ can  only generate $G^*$
if one of the pair is a conjugate of $A_1$ and the other is a conjugate of $A_2$. 
From this, and the surjectivity of $\phi$, we deduce that $\phi^2(A_1)$ is a non-trivial
subgroup of a conjugate of $A_1$ and $\phi^2(A_2)$ is a non-trivial
subgroup of a conjugate of $A_2$. Lemma \ref{l:proper} then forces $\phi^2|_{A_1}$
and $\phi^2|_{A_2}$ to be isomorphisms onto conjugates of $A_1$ and $A_2$, 
respectively. We can then proceed exactly as in the proof of Theorem 
\ref{t:rigid} to conclude that $\phi$ is an inner
automorphism of $G^*$.
\end{proof}

\begin{remarks}
(1). Let $G$ be a group that has a finite classifying space and contains
an element $\gamma\neq 1$ conjugate to its inverse.
Our construction embeds $G$ in a group $G^*$ that retains these properties and is
complete. This recovers the main theorem of \cite{BS}. To obtain a specific example,
one can take $G=\<x,y\mid xyx^{-1}y\>$.

\smallskip

(2) A second (less concise and explicit) proof of Theorem \ref{thm} can be obtained 
by means of a careful application of relative small cancellation theory, following
Miller and Schupp's method in \cite{MS} but avoiding the use of finite groups.
In outline,
one constructs a two generator perfect hyperbolic group $B$, takes a free product
$(G\times \Z^2)* (B\times\Z)$  and then forms the
quotient by a set of relators satisfying a strong small cancellation condition \cite{osin}. A key point
in \cite{MS} is that automorphisms  preserve the different finite orders of elements, while here they preserve the different ranks of centralizers of elements.  
\end{remarks}


\begin{thebibliography}{99} 
 


\bibitem{BL}  Mikhail Belilopetsky and Alexander Lubotzky,
{\it Finite groups and hyperbolic manifolds\/}, Invent. Math. {\bf{162}} (2005), 459--472 (2005)


\bibitem{BS} M.R. Bridson and H. Short,
{\em Inversion is possible in groups with no periodic automorphisms},
Proc. Edinburgh Math. Soc., {\bf 59} (2016), 11--16.


\bibitem{BHM} M.R. Bridson, A. Hinkkanen and G. Martin,
{\em Quasiregular self-mappings of manifolds and word hyperbolic groups},
Compositio Math. {\bf{143}} (2007), 1613--1622.

\bibitem{BCT} B.A. Burton, A. Coward and S. Tillmann, {\em Computing closed essential  surfaces in knot complements},
 	SCG '13: Proceedings of the 29th Annual Symposium on Computational Geometry, ACM, 2013, pp. 405--414.

\bibitem{hatcher} Allen Hatcher, {\em Boundary curves of incompressible surfaces},
Pac. J. Math. {\bf 99} (1982), 373--37. 
 
 
 \bibitem{Hirsh} R. Hirshon, {\em  Some properties of Endomorphisms in residually finite groups},
 J. Austr. Math. Soc.  {\bf 24} (1977),  117--120.
 
 \bibitem{HW} S. Henry and J. Weeks, {\em Symmetry groups of hyperbolic knots and links},
  J. Knot Theory Ramifications, 1 (1992), 185--201
  
  \bibitem{KS} K. Kodama and M. Sakuma, {\em Symmetry groups of prime knots up to 10 crossings},
  in ``Knots 90" (Akio Kawauchi, ed.)
  Proceedings of the International Conference on Knot Theory and Related Topics held in Osaka (Japan). De Gruyter, Berlin, Boston 1992.  
  
  \bibitem{koji}
S. Kojima, {\em Isometry transformations of hyperbolic 3-manifolds}, Topol. Appl. {\bf{29}} (1988),
297--307.
 
\bibitem{K} Ilya Kapovich, {\em A non-quasiconvexity embedding theorem for hyperbolic groups},
Math.Proc.Camb.Phil.Soc., {\bf{127}} (1999), 461--486.

\bibitem{LR} Darren Long and Alan Reid, {\em On asymmetric hyperbolic manifolds},
Math.Proc.Camb.Phil.Soc {\bf 138} (2005), 301--306.

\bibitem{LS} Roger C. Lyndon and Paul E.  Schupp, {\it Combinatorial Group Theory},
Ergebnisse der Mathematik und ihrer Grenzgebiete 89, Springer--Verlag (Berlin), 1977.

\bibitem{menasco}
W. Menasco, 
{\em Closed incompressible surfaces in alternating knot and link complements}, Topology {\bf{23}} (1984), 37--44.

\bibitem{MS} Charles F. Miller  III and Paul E. Schupp, {\em Embeddings into Hopfian Groups},
J. Algebra {\bf 17} (1971), 171--176.

 
\bibitem {osin} D.~V. Osin, Small cancellations over relatively hyperbolic groups and embedding theorems, Ann. of Math., {\bf 172} (2010),  
1--39.
 
\bibitem{scott} G.P.~Scott, {\em Compact submanifolds of 3-manifolds}, J. London Mathematical Soc. {\bf{7}} (1973), 246--250.

\bibitem{serre} J-P Serre,
{\it{Trees}}, Springer-Verlag, Berlin--Heidelberg--New York, 1977.
  

\bibitem{stall} J.R.~Stallings,
{\em Group theory and three-dimensional manifolds}, Yale Mathematical Monographs {\bf{4}},
Yale University Press, New Haven, 1971. 

\bibitem{sela} Z.~Sela,
{\em Endomorphisms of hyperbolic groups : I}, Topology {\bf{38}} (1999), 301--321.

\bibitem{wpt} W.P. Thurston, {\em The Geometry and Topology of 3--manifolds}, Princeton University Lectures Notes, 1978--1981.

\end{thebibliography}
\end{document}